\definecolor{link}{RGB}{11,0,128}
\newcommand{\gA}{\alpha}
\newcommand{\bP}{\mathbb{P}}
\newcommand{\cE}{\mathcal{E}}
\newcommand{\cG}{\mathcal{G}}
\newcommand{\cO}{\mathcal{O}}
\newcommand{\cX}{\mathcal{X}}
\newcommand{\sO}{\mathscr{O}}
\DeclareMathOperator{\Gr}{Gr}			% Affine Grassmannian
\DeclareMathOperator{\Ker}{Ker}		% The kernel
\DeclareMathOperator{\Lie}{Lie}		% The Lie algebra
\DeclareMathOperator{\Res}{Res}		% The restriction of the representation
\DeclareSymbolFont{cyrletters}{OT2}{wncyr}{m}{n}
\DeclareMathSymbol{\Sha}{\mathalpha}{cyrletters}{"58}	% Sha
\DeclareMathOperator{\Spa}{Spa}		% Adic spectrum of an f-adic ring
\DeclareMathOperator{\Spec}{Spec}		% Spectrum of a ring
\newcommand{\ce}{\colonequals}
\newcommand{\dR}{{\mathrm{dR}}}		% de Rham
\newcommand{\et}{\mathrm{\acute{e}t}}	% for etale cohomology (mainly used in subscripts)
\newcommand{\isomto}{\overset{\sim}{\longrightarrow}}
\newcommand{\llb}{\llbracket}			% [[
\newcommand{\llp}{(\!(}			% ((
\newcommand{\ra}{\rightarrow}
\newcommand{\rrb}{\rrbracket}			% ]]
\newcommand{\rrp}{)\!)}			% ))
\newcommand{\surjects}{\twoheadrightarrow}
\newcommand{\tensor}{\otimes} 			% tensor product
\newcommand{\wt}{\widetilde}
\providecommand{\up}[1]{{\upshape(}#1{\upshape)}}
\providecommand{\uref}[1]{{\upshape\ref{#1}}}
\providecommand{\uS}{{\upshape\S}}
\providecommand{\f}[2]{\frac{#1}{#2}}
\renewcommand{\b}{\textbf}
\providecommand{\ucolon}{{\upshape:} }
\newcommand{\brems}{\begin{rems} \hfill \begin{enumerate}[label=\b{\thenumberingbase.},ref=\thenumberingbase]}
\newcommand{\erems}{\end{enumerate} \end{rems}}
\newcommand{\begs}{\begin{egs} \hfill \begin{enumerate}[label=\b{\thenumberingbase.},ref=\thenumberingbase]}
\newcommand{\eegs}{\end{enumerate} \end{egs}}
\newcommand{\m}{\item}
\newcommand{\bsm}{\begin{smallmatrix}}
\newcommand{\esm}{\end{smallmatrix}}
\newcommand{\blem}{\begin{lemma}}
\newcommand{\elem}{\end{lemma}}
\newcommand{\bconj}{\begin{conj}}
\newcommand{\econj}{\end{conj}}
\newcommand{\bprob}{\begin{Problem}}
\newcommand{\eprob}{\end{Problem}}
\newcommand{\bq}{\begin{Q}}
\newcommand{\eq}{\end{Q}}
\newcommand{\benum}{\begin{enumerate}[label={{\upshape(\alph*)}}]}
\newcommand{\benuma}{\begin{enumerate}[label={{\upshape(\arabic*)}}]}
\newcommand{\benumb}{\begin{enumerate}[label={{\upshape\b{\arabic*.}}}]}
\newcommand{\benumr}{\begin{enumerate}[label={{\upshape(\roman*)}}]}
\newcommand{\eenum}{\end{enumerate}}
\newcommand{\bitem}{\begin{itemize}}
\newcommand{\eitem}{\end{itemize}}
\newcommand{\bc}{\begin{comment}}
\newcommand{\ec}{\end{comment}}
\newcommand{\bd}{\begin{defn}}
\newcommand{\ed}{\end{defn}}
\newcommand{\beg}{\begin{eg}}
\newcommand{\eeg}{\end{eg}}
\newcommand{\bcl}{\begin{claim}}
\newcommand{\ecl}{\end{claim}}
\newcommand{\q}{\quad}
\providecommand{\qxq}[1]{\quad\text{#1}\quad}
\providecommand{\qx}[1]{\quad\text{#1}}
\newcommand{\tst}{\textstyle}
\newcommand{\ba}{\begin{aligned}}
\newcommand{\ea}{\end{aligned}}
\newcommand{\be}{\begin{equation}}
\newcommand{\ee}{\end{equation}}
\newcommand{\bpf}{\begin{proof}}
\newcommand{\epf}{\end{proof}}
\newcommand{\bthm}{\begin{thm}}
\newcommand{\ethm}{\end{thm}}
\newcommand{\bprop}{\begin{prop}}
\newcommand{\eprop}{\end{prop}}
\newcommand{\bcor}{\begin{cor}}
\newcommand{\ecor}{\end{cor}}
\newcommand{\brem}{\begin{rem}}
\newcommand{\erem}{\end{rem}}
\newaliascnt{numberingbase}{subsection}
\numberwithin{equation}{numberingbase}
\newtheoremstyle{thms}{0.5em}{0.5em}{\itshape}{}{\bfseries}{.}{ }{}
\theoremstyle{thms}
\newtheorem{conj}[numberingbase]{Conjecture}
\newtheorem{cor}[numberingbase]{Corollary}
\newtheorem{lemma}[numberingbase]{Lemma}
\newtheorem{prop}[numberingbase]{Proposition}
\newtheorem{Q}[numberingbase]{Question}
\newtheorem{thm}[numberingbase]{Theorem}
\newtheoremstyle{claims}{0.5em}{0.5em}{}{}{\itshape}{.}{ }{}
\theoremstyle{claims}
\newtheorem{claim}[equation]{Claim}
\newtheoremstyle{defs}{0.5em}{0.5em}{}{}{\bfseries}{.}{ }{}
\theoremstyle{defs}
\newtheorem{defn}[numberingbase]{Definition}
\newtheorem{eg}[numberingbase]{Example}
\newtheorem*{egs}{Examples}
\newtheorem{rem}[numberingbase]{Remark}
\newtheorem*{rems}{Remarks}
\Crefname{claim}{Claim}{Claims}
\Crefname{bclaim}{Claim}{Claims}
\Crefname{sublemma}{Lemma}{Lemmas}
\Crefname{conj}{Conjecture}{Conjectures}
\Crefname{cor}{Corollary}{Corollaries}
\Crefname{defn}{Definition}{Definitions}
\Crefname{eg}{Example}{Examples}
\Crefname{prop}{Proposition}{Propositions} 
\Crefname{Q}{Question}{Questions}
\Crefname{rem}{Remark}{Remarks}
\Crefname{thm}{Theorem}{Theorems}
\Crefname{Theorem}{Theorem}{Theorems}
\Crefname{variant}{Variant}{Variants}
\Crefname{caution}{Caution}{Cautions}
\theoremstyle{thms}
\newtheorem{thm-tweak}[subsection]{Theorem}
\Crefname{thm-tweak}{Theorem}{Theorems}
\newtheorem{lemma-tweak}[subsection]{Lemma}
\Crefname{lemma-tweak}{Lemma}{Lemmas}
\newtheorem{cor-tweak}[subsection]{Corollary}
\Crefname{cor-tweak}{Corollary}{Corollaries}
\newtheorem{prop-tweak}[subsection]{Proposition}
\Crefname{prop-tweak}{Proposition}{Propositions} 
\newtheorem{conj-tweak}[subsection]{Conjecture}
\Crefname{conj-tweak}{Conjecture}{Conjectures} 
\newtheorem{q-tweak}[subsection]{Question}
\Crefname{q-tweak}{Question}{Questions} 
\theoremstyle{defs}
\newtheorem{defn-tweak}[subsection]{Definition}
\Crefname{defn-tweak}{Definition}{Definitions}
\newtheorem{eg-tweak}[subsection]{Example}
\Crefname{eg-tweak}{Example}{Examples}
\newtheorem*{rems-tweak}{Remarks}
\newtheorem{rem-tweak}[subsection]{Remark}
\Crefname{rem-tweak}{Remark}{Remarks}
\newtheoremstyle{subsection-tweak}
   {2pt}
   {3pt}%
   {}
   {}%
   {\bfseries}
   {}%
   {.5em}
   {\thmnumber{\@{#1}{}\@{#2}.}%
    \thmnote{~{\bfseries#3.}}}    
\theoremstyle{subsection-tweak}
\newtheorem{pp}[numberingbase]{}
\newcommand{\bpp}{\begin{pp}}
\newcommand{\epp}{\end{pp}}
\theoremstyle{subsection-tweak}
\newtheorem{pp-tweak}[subsection]{}
\def\@tocline#1#2#3#4#5#6#7{
%	\ifnum#1<2 %Added this to suppress subsections in TOC
    \begingroup 
    \@ifempty{#4}{}{}

    \parindent\z@ \leftskip#3\relax \advance\leftskip\@tempdima\relax
    #5\hskip-\@tempdima
      \ifcase #1
       \or\or \hskip 2em \or \hskip 1em \else \hskip 3em \fi%
      #6\nobreak\relax
    \dotfill\hbox to\@pnumwidth{\@tocpagenum{#7}}\par
    \nobreak
    \endgroup
 }
 \def\l@section{\@tocline{1}{0pt}{1pc}{}{}}
\renewcommand{\tocsection}[3]{%
  \indentlabel{\@ifnotempty{#2}{\makebox[1.3em][l]{%
    \ignorespaces#1 \bfseries{#2}.\hfill}}}\bfseries{#3}
    \vspace{-5pt}}
\renewcommand{\tocsubsection}[3]{%
  \indentlabel{\@ifnotempty{#2}{\hspace*{-0.5em}\makebox[2.1em][l]{%
    \ignorespaces#1#2.\hfill}}}#3
    \vspace{-5pt}}
\newcommand\appendix@section[1]{%
  \refstepcounter{section}%
  \orig@section*{Appendix \@Alph\c@section. #1}%
%  \addcontentsline{toc}{section}{Appendix \@Alph\c@section. #1}%
}
\let\orig@section\section
\g@addto@macro\appendix{\let\section\appendix@section}
\author{K\k{e}stutis \v{C}esnavi\v{c}ius $^{(1)}$}
\author{Alex Youcis $^{(2)}$}
\address[1]{\scriptsize CNRS, Universit\'{e} Paris-Saclay,   Laboratoire de math\'{e}matiques d'Orsay, F-91405, Orsay, France}
\address[2]{\scriptsize National University of Singapore, Level 4, Block S17, 10 Lower Kent Ridge Road, Singapore 119076}
\email[1]{\scriptsize kestutis@math.u-psud.fr}
\email[2]{\scriptsize alex.youcis@gmail.com}
\date{\today}
\renewcommand*\contentsname{}
\begin{document}

\subjclass[2020]{Primary 14G45; Secondary 14L15, 14M15.}
\keywords{Affine Grassmannian, Grothendieck--Serre, perfectoid, reductive group, torsor}

\title{The analytic topology suffices for the $B_\dR^+$-Grassmannian}

\maketitle

\begin{abstract} 
The $B_\dR^+$-affine Grassmannian was introduced by Scholze in %his 2014 Berkeley lectures in 
the context of the geometric local Langlands program in mixed characteristic and is the Fargues--Fontaine curve analogue of the equal characteristic Beilinson--Drinfeld affine Grassmannian. For a reductive group $G$, it is defined as the \'{e}tale (equivalently, $v$-) sheafification of the presheaf quotient $LG/L^+G$ of the $B_\dR$-loop group $LG$ by the $B_\dR^+$-loop subgroup $L^+G$. We combine algebraization and approximation techniques with known cases of the Grothendieck--Serre conjecture to show that the analytic topology suffices for this sheafification, more precisely, that the $B_\dR^+$-affine Grassmannian agrees with the analytic sheafification of the aforementioned presheaf quotient $LG/L^+G$.
\end{abstract}

\hypersetup{
    linktoc=page,     %set to all if you want both sections and subsections linked
}
\renewcommand*\contentsname{}
\q\\
\tableofcontents

\newcommand{\Bdrp}{B_{\mathrm{dR}}^+}

\newcommand{\Bdr}{B_{\mathrm{dR}}}

 \newcommand{\alex}[1]{\footnote{\textcolor{purple}{Alex:} #1}}
 \newcommand{\kestutis}[1]{\footnote{\textcolor{NavyBlue}{Kestutis:} #1}}

%%%%%%%%%%Letter macross%%%%%%%%%%
\newcommand{\Z}{\mathbb{Z}}
\newcommand{\R}{\mathbb{R}}

\newcommand{\mc}[1]{\mathcal{#1}}
\newcommand{\cat}[1]{\mathbf{#1}}
\newcommand{\triv}{\mathrm{triv}}

\section{The $\Bdrp$-affine Grassmannian} \label{sec:recall}

For a reductive group $G$ over a ring $R$, the Beilinson--Drinfeld affine Grassmannian $\Gr_G$ plays an important role in the geometric Langlands program, as well as in other fields that feature reductive groups and their torsors. Letting $LG$ (resp.,~$L^+G$) be the \emph{loop functor} (resp.,~its \emph{positive loop subfunctor}) that sends a variable $R$-algebra $A$ to $G(A\llp t \rrp)$ (resp.,~to $G(A\llb t\rrb)$), the affine Grassmannian $\Gr_G$ is defined as the \'{e}tale (equivalently, fpqc) sheafification of the presheaf quotient $LG/L^+G$. By the recent results \cite{Gr-presheaf}*{Theorems 2.5 and~3.4}, based on the study of $G$-torsors over $\bP^1_A$, Zariski sheafification gives the same result and, if $G$ is totally isotropic (for instance, quasi-split), then no sheafification is needed at all: then $\Gr_G$ already agrees with the presheaf quotient $LG/L^+G$. 

In his Berkeley lectures \cite{SW20}, Scholze adapted the definition of Beilinson--Drinfeld to the then-emergent geometric \emph{local} Langlands program, and subsequently with Fargues applied it in their elaboration of this program in \cite{FS24}. To review his definition, we let $K$ be a nonarchimedean local field, let $G$ be a smooth affine group scheme defined either over $K$ or over its ring of integers $\cO_K$, %For the sake of focus, we assume that $F$ is of characteristic $0$: this excludes $F$ of positive characteristic, but these offer nothing new anyway because for them one ends up with the usual affine Grassmannian reviewed above. 
and recall that the $B_\dR^+$-affine Grassmannian $\Gr_G^{B_\dR^+}$ is a functor on the category of perfectoid  $\cO_K$-algebra pairs $(A, A^+)$. % (so if $F$ is of characteristic $0$, then $A$ is an algebra either over $F$ or over its residue field $\bF_F$). 
For such a pair, we let $(A^\flat, A^{\flat+})$ denote its tilt, let $\varpi^\flat \in A^{\flat+}$ be a pseudouniformizer such that $\omega\ce(\omega^\flat)^\sharp$ satisfies $\omega^p\mid p$ in $A^+$ (see \cite[Lemma 6.2.2]{SW20} or \cite{flat-purity}*{Section~2.1.2}), let $W_{\cO_K}(A^{\flat+})$ denote the $\cO_K$-ramified Witt vectors of $A^{\flat+}$, and let 
\[
\tst I\ce \Ker\left(W_{\cO_K}(A^{\flat+})\surjects A^+\right)
\]
be the kernel of the Fontaine $\cO_K$-algebra map sending any Teichm\"uller $[a]$ to $a^\sharp$ (compare with~\cite{flat-purity}*{Equation~(2.1.1.1)}). In the $B_\dR^+$ context, the role of the formal power series ring $A\llb t \rrb$ is played by 
 \begin{equation*}
\tst    \Bdrp(A) \ce \varprojlim_n (W_{\cO_K}(A^{\flat+})[\f{1}{[\omega^\flat]}]/I^n)
 \end{equation*}
 compare with \cite[page~138]{SW20}. This notation is slightly abusive because $B_\dR^+(A)$ does depend on $K$, although not on $A^+$ nor on $\varpi^\flat$. % Because neither of the intermediate quotients does, being the localization that on the underlying reduced corresponds to the nonvanishing locus of $\varpi$
  The ideal $I$ is functorial in $(A, A^+)$, principal, and generated by a nonzerodivisor $\xi$, and the role of the Laurent power series ring $A\llp t \rrp$ is played~by
\[
\tst\Bdr(A)\ce\Bdrp(A)[\f{1}{I}].
\]
In the following special cases, we can be slightly more explicit.
\benuma
\m \label{m:case-1}
The field $K$ is of characteristic $0$ and $A$ is a $K$-algebra: then we may choose $\varpi^\flat$ such that $\varpi^p$ is a unit multiple of $p$ (see \cite{flat-purity}*{Section 2.1.2}), so $B_\dR^+(A)$ and $B_\dR(A)$ are $K$-algebras. %(and not merely $\cO_F$-algebras). This is because the uniformizer of $\pi$ (together with $p$) maps to a unit in each of the intermediate quotients, as one may check with $n = 1$.

\m
The field $K$ is of characteristic $0$ and $A$ is an algebra over its residue field $k$: then $A^{\flat+} \cong A^+$, so $I = (\pi)$ where $\pi \in \cO_K$ is a uniformizer, and $B_\dR^+(A) = W_{\cO_K}(A)$ with $B_\dR(A) = W_{\cO_K}(A)[\f1{\pi}]$.

\m \label{m:case-3}
The field $K$ is of characteristic $p > 0$. Then $\cO_K \simeq k\llb \zeta\rrb$, the functor $W_{\cO_K}(-)$ is the completed the tensor product over $k$ with $k\llb \zeta \rrb$, and $B_\dR^+(A) \simeq A\llb \pi - \zeta\rrb$ with $B_\dR(A) \simeq A\llp \pi - \zeta\rrp$, where $\pi \in \cO_K$ is a uniformizer; in terms of this presentation, the ideal $I$ is generated by $\pi - \zeta$. For the sake of uniformity of discussion, we do not exclude this case, but it will offer nothing new relative to the setting of the Beilinson--Drinfeld affine Grassmannian reviewed above. 
\eenum
We stress that the cases \ref{m:case-1}--\ref{m:case-3} are nonexhaustive, see, for instance, \cite{SW20}*{Example 6.1.5 4.}. 

In the $B_\dR^+$ context, the \emph{loop functor} (resp.,~its \emph{positive loop subfunctor}) is defined by
\[
LG \colon (A, A^+) \mapsto G(B_\dR(A)) \qxq{\upshape{(resp.,~by}} L^+G \colon (A, A^+) \mapsto G(B^+_\dR(A))),
\]
granted either that $G$ is defined over $\cO_K$ or that one restricts to $(A, A^+)$ with $A$ a $K$-algebra. We have reused the notation $LG$ and $L^+G$ because the power series context will henceforth play no role.

The \emph{$B_\dR^+$-affine Grassmannian} $\Gr_G^{B_\dR^+}$ is defined as the \'{e}tale sheafification of the presheaf quotient $LG/L^+G$, compare with \cite{SW20}*{Definition 19.1.1}. Our goal in this article is to show that for reductive $G$, the sheafification for the much coarser analytic topology gives the same result, namely, that $\Gr_G^{B_\dR^+}$ agrees with the analytic sheafification of the presheaf quotient $LG/L^+G$, see \Cref{thm:main}~below. 

In the view of the analogy with the Beilinson--Drinfeld affine Grassmannian, it could be that, at least for totally isotropic $G$, no sheafification is needed at all, namely, that $\Gr_G^{B_\dR^+}$ is the  presheaf quotient $LG/L^+G$. However, we do not know how to approach this, nor how to find a counterexample.

\section{The modular description} \label{sec:modular}

%Let $(R,R^+)$ be a perfectoid Tate--Huber pair, and let $\mc{E}$ be a scheme (or a $\mc{G}$-torsor) on $\Spec(\Bdrp(R))$. We then denote by $\mc{E}[\nicefrac{1}{I}]$ the pullback of $\mc{E}$ along $\Spec(\Bdr(R)\to\Spec(\Bdrp(R))$. We use similar notation for morphisms of such objects.

Our proof that the analytic sheafification suffices will hinge on the following modular description of the $B_\dR^+$-affine Grassmannian. This modular description has already been given in \cite[Proposition~19.1.2]{SW20}, although we prefer the slightly different argument for it given below.

%Compare with {cf.\@ }.  {\cite[\S20.2-- 20.3]{SW20}}

\begin{prop}\label{prop:Gr-is-v-sheaf}
For a nonarchimedean local field $K$ and a smooth affine group scheme $G$ defined either over $K$ or over $\cO_K$, letting $(A, A^+)$ range over perfectoid  $\cO_K$-algebra pairs \up{resp.,~such that $A$ is a $K$-algebra if $G$ is only defined over $K$}, we have the following functorial modular~interpretation\ucolon
\[
    \Gr^{B_\dR^+}_G(A)\cong \left\{(\cE,\iota)\colon \begin{aligned}&\  \cE\text{ is a }G\text{-torsor over }\Bdrp(A),\\ & \  \iota \in  \cE(B_\dR(A)) \text{ is a trivialization over $B_\dR(A)$}
    \end{aligned}\right\}/\sim.
\]
 Moreover, $\Gr^{B_\dR^+}_G$ is a sheaf for the $v$-topology and it contains the presheaf quotient $LG/L^+G$ as the subfunctor that parametrizes those pairs $(\cE, \iota)$ for which $\cE$ is a trivial torsor.
\end{prop}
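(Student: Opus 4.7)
The plan is to introduce the functor $\cF$ of isomorphism classes of pairs $(\cE,\iota)$ as in the statement and to identify it with $\Gr_G^{\Bdrp}$ via a natural transformation from $LG/L^+G$. Sending $g \in G(\Bdr(A)) = LG(A)$ to the trivial torsor with trivialization $g$ defines a morphism of presheaves whose fibers are exactly the left cosets of $L^+G(A) = G(\Bdrp(A))$ inside $LG(A)$; this exhibits $LG/L^+G$ as the subpresheaf of $\cF$ consisting of those pairs with trivial $\cE$, which also takes care of the final sentence of the proposition. The proposition thus reduces to the two assertions that \textbf{(i)} $\cF$ is a $v$-sheaf and \textbf{(ii)} the inclusion $LG/L^+G \hra \cF$ becomes an isomorphism after \'etale sheafification.

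For \textbf{(ii)}, we need every $G$-torsor $\cE$ on $\Bdrp(A)$ to be trivial after an \'etale (equivalently, $v$-) cover of $(A, A^+)$. Using the $I$-adic completeness of $\Bdrp(A)$ together with the smoothness of $G$, such a torsor is the unique deformation of its reduction $\cE|_A$ along the square-zero nilpotent thickenings $\Bdrp(A)/I^{n+1} \surjects \Bdrp(A)/I^n$, since each kernel $I^n/I^{n+1}$ is an invertible $A$-module. Hence it suffices to show that $G$-torsors on the perfectoid ring $A$ are \'etale-locally trivial, which is standard for smooth affine $G$.

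For \textbf{(i)}, the content is $v$-descent for $G$-torsors on $\Bdrp(A)$ together with compatible trivializations over $\Bdr(A)$. Filtering by powers of $I$, one reduces thickening by thickening to $v$-descent for $G$-torsors on $A$ itself, which is known for perfectoids; the same deformation-theoretic argument as above (using smoothness of $G$ and invertibility of the graded pieces) promotes descent on $A$ to descent on each $\Bdrp(A)/I^n$. Inverting $\xi$ and passing to the inverse limit then handles $\iota$ and yields the $v$-sheaf property of $\cF$. I expect the main subtlety to be the compatibility of this inverse limit with $v$-descent, i.e.\ rigorously assembling the descended $G$-torsors on each $\Bdrp(A)/I^n$ into an honest $G$-torsor on $\Bdrp(A)$; this step is bypassed in \cite{SW20} by invoking the small $v$-sheaf formalism, and an alternative here would be to argue via the equivalence between $G$-torsors on $\Bdrp(A)$ and compatible systems of $G$-torsors on the finite-level quotients.
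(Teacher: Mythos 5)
Your outline matches the paper's at a high level---identify the modular functor $\cF$, exhibit $LG/L^+G$ as the subfunctor of trivial-$\cE$ pairs, and reduce to (i) the $v$-sheaf property and (ii) \'etale-local membership in $LG/L^+G$---and the deformation-theoretic reduction to the mod-$I$ fiber is sound. But there is a genuine gap in (ii): the claim that ``$G$-torsors on the perfectoid ring $A$ are \'etale-locally trivial, which is standard for smooth affine $G$'' conflates two different topologies. The standard fact produces a trivializing \'etale cover of the \emph{scheme} $\Spec(A)$, whereas the sheafification in question requires a trivializing cover of the \emph{adic space} $\Spa(A,A^+)$, i.e.\ by rational opens and finite \'etale maps of perfectoid pairs. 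Passing from one to the other needs a real argument: the paper either invokes the universal property of the strictly Henselian ringed topos $\Spec(A)_\et$ (\cite{SAG}*{Proposition 1.2.3.4}) to get a morphism of ringed topoi $\Spa(A,A^+)_\et \to \Spec(A)_\et$ along which the trivializing cover pulls back, or else, for each $x \in \Spa(A,A^+)$, trivializes $E$ over the perfectoid residue field $k_x$ after a finite separable extension and spreads out via algebraization and approximation to a finite \'etale neighborhood of $x$.

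For (i), you explicitly flag the assembly of the finite-level descents into a descent over $\Bdrp(A)$ as an unresolved subtlety, and that is indeed where your argument as written stops. The paper avoids the assembly problem by reordering the steps: after descending $\cE'|_{A'}$ to a $G$-torsor $E$ over $A$ (via the Tannakian description of $G$-torsors and $v$-descent for vector bundles on perfectoid spaces), it lifts $E$ in one step to a $G$-torsor $\cE$ over $\Bdrp(A)$ using invariance of $G$-torsors under Henselian pairs (\cite{Hitchin-torsors}*{Theorem 2.1.6}), so that $\cE|_{\Bdrp(A')}\simeq \cE'$ compatibly mod $I$. The remaining task is then only to match the transferred descent datum to the canonical one on $\cE|_{\Bdrp(A')}$, a rigidity statement controlled level by level by the \v{C}ech $H^0$ and $H^1$ of $\Lie\cG \otimes_A I^n/I^{n+1}$ along the $v$-cover; these are computed from the vanishing of higher $v$-cohomology of the structure sheaf on affinoid perfectoids. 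One further omission: for the presheaf of isomorphism classes to be a $v$-sheaf rather than merely the underlying prestack a $v$-stack, one must first observe, as the paper does, that $\iota$ kills all automorphisms of $(\cE,\iota)$ because $I$ is generated by a nonzerodivisor. Your suggested alternative via the equivalence between $G$-torsors on $\Bdrp(A)$ and compatible systems on the finite quotients is essentially the same Henselian-invariance input in different clothing, so the route is repairable, but as it stands the key step is missing.
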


\begin{proof}  
Let us temporarily write $\Gr_G'$ for the functor defined by the displayed modular description. Since $LG$ parametrizes the possible $\iota$ for the trivial $G$-torsor and $L^+G$ parametrizes the automorphisms of the trivial $G$-torsor over $B_\dR^+(A)$, the presheaf quotient $LG/L^+G$ is identified with the subfunctor of $\Gr_G'$ that parametrizes those $(\cE, \iota)$ for which $\cE$ is a trivial $G$-torsor. Thus, all we need to show is that $\Gr'_G$ is a $v$-sheaf (in particular, an \'{e}tale sheaf) and that each $(\cE, \iota)$ lies in $LG/L^+G$ \'etale locally on $\Spa(A, A^+)$, in other words, that $\cE$ trivializes \'{e}tale locally on $\Spa(A, A^+)$.

For the latter, of course, $\cE$ inherits smoothness over $B_\dR^+(A)$ (and also affineness) from $G$, so to trivialize $\cE$ over $B_\dR^+(A')$ for a perfectoid $\cO_K$-algebra pair $(A', A'^+)$ over $(A, A^+)$, it suffices to arrange that $\cE(A') \neq \emptyset$, that is, to trivialize $\cE|_{A'}$. In other words, we need to trivialize $\cE|_A$ \'etale locally on $\Spa(A, A^+)$, knowing that, by the $A$-smoothness of $\cE|_A$, the latter trivializes \'{e}tale locally on $\Spec(A)$. By \cite{SAG}*{Proposition 1.2.3.4}, the \'etale topos $(\mathrm{Shv}(\Spec(A)_\et), \sO_{\Spec(A)_\et})$ ringed by the structure sheaf is final among all the strictly Henselian locally ringed topoi $(\cX, \sO_\cX)$ equipped with a ring homomorphism $A \ra \Gamma(\cX, \sO_\cX)$. Since the strictly local rings of an adic space are strictly Henselian, by \cite{SAG}*{Remark 1.2.2.10} (with \cite{Hub96}*{Propositions~2.5.5 and~2.5.17}), the \'{e}tale topos $(\mathrm{Shv}(\Spa(A, A^+)_\et), \sO_{\Spa(A,\, A^+)_\et})$ of the adic space $\Spa(A, A^+)$ ringed by the structure sheaf is an example of an $(\cX, \sO_\cX)$ as above. It follows that there is a morphism of ringed topoi 
\[
(\mathrm{Shv}(\Spa(A, A^+)_\et), \sO_{\Spa(A,\, A^+)_\et}) \rightarrow (\mathrm{Shv}(\Spec(A)_\et), \sO_{\Spec(A)_\et}).
\]
Thus, an \'etale cover of $\Spec(A)$ that trivializes $\cE|_A$ pulls back to an \'etale cover of $\Spa(A, A^+)_\et$, to the effect that $\cE|_A$ also trivializes locally on $\Spa(A, A^+)_\et$,  as desired.\footnote{A more concrete way to trivialize $\cE|_{A}$ \'etale locally on $\Spa(A, A^+)_\et$ is as follows. For a fixed $x \in \Spa(A, A^+)$,  let $\Spa(A_i, A_i^+) \subset \Spa(A, A^+)$ range over the rational opens containing $x$. By, for example, \cite[Example~5.2]{Sch22}, %and \cite[Theorem 7.1.1]{SW20}
the residue pair $(k_x, k_x^{+})$ is perfectoid and, letting $\varpi \in A^+$ be a pseudouniformizer, $k_x^{+} \cong \left(\varinjlim_i A_i^{+}\right)^{\wedge}$ with $k_x \cong k_x^+[\f1\varpi]$, where the completion is $\varpi$-adic. Since $k_x$ is a field and $\cE$ is smooth and nonempty, $\cE(\wt k) \neq \emptyset$ for some finite separable field extension $\wt{k}/k_x$. Each $A_i^+$ is $\varpi$-Henselian and $A_i \cong A_i^+[\f1\varpi]$ (resp.,~$k_x \cong k_x^+[\f1\varpi]$), so $\varinjlim_i A_i^{+}$ is $\varpi$-Henselian %(see \cite{SP}*{Lemma~\href{https://stacks.math.columbia.edu/tag/0FWT}{0FWT}}) 
with $\varinjlim_i A_i \cong (\varinjlim_i A_i^{+})[\f1\varpi]$. Thus, by \cite[Corollary~2.1.20]{Hitchin-torsors} (with $B \ce \varinjlim_i A_i$ and $B' \ce \varpi (\varinjlim_i A_i^{+})$ there) and a limit argument as in \cite{KL15}*{Remark 1.2.9}, there are an $i$ and a finite \'{e}tale $A_i$-algebra $\wt{A}$ with $\wt{A} \tensor_{A_i} k_x \simeq \wt{k}$. The restriction of scalars $\wt{\cE} \ce \Res_{\wt{A}/A_i}(\cE_{\wt{A}})$ is a smooth affine $A_i$-scheme, see \cite{BLR90}*{Section 7.6, Propositions 2 and 5, Theorem~4}, and, by construction, $\wt{\cE}(k_x) \cong \cE(\wt{k}) \neq \emptyset$. Therefore, by \cite{Hitchin-torsors}*{Theorem 2.2.2}, at the cost of enlarging $i$, also $\wt{\cE}(A_i) \cong \cE(\wt{A}) \neq \emptyset$. This implies the desired triviality of $\cE$ over some \'etale neighborhood of $x$.}

For the remaining $v$-sheaf property of $\Gr_G'$, we will use the trivialization $\iota$. Firstly, thanks to $\iota$ and the ideal $I$ from \S\ref{sec:recall} being generated by a nonzerodivisor, the objects of the prestack in groupoids
\[
\tst (A, A^+) \mapsto \left\{(\cE,\iota)\colon \begin{aligned}&\  \cE\text{ is a }G\text{-torsor over }\Bdrp(A),\\ & \  \iota \in  \cE(B_\dR(A)) \text{ is a trivialization over $B_\dR(A)$}
    \end{aligned}\right\}
\]
have no nontrivial automorphisms. Thus, it suffices to show that this prestack is a $v$-stack, in fact, that $v$-descent is uniquely effective for its objects. To this end, we consider maps
\[
(A, A^+) \ra (A', A'^+) \rightrightarrows (A'', A''^+)
\]
of perfectoid  $\cO_K$-algebra pairs such that the first one induces a $v$-cover on adic spectra whose self-(fiber product) is $\Spa(A'', A''^+)$, and we consider a pair $(\cE', \iota')$ over $B_\dR^+(A')$ equipped with a descent datum with respect to this cover. Since the ideal $I$ is functorial in $(A, A^+)$ (see \S\ref{sec:recall}), the reduction modulo $I$ of this descent datum equips the $G$-torsor $\cE'|_{B_\dR^+(A')/I}$ over $A'$ with a descent datum with respect to the $v$-cover $(A, A^+) \ra (A', A'^+)$. By the Tannakian formalism for $G$-torsors over perfectoid spaces \cite{SW20}*{Theorem 19.5.2} combined with the $v$-descent for vector bundles on perfectoid spaces \cite{SW20}*{Corollary 17.1.9}, this last descent datum is uniquely effective, so $\cE'|_{B_\dR^+(A')/I}$ descends to a unique $G$-torsor $E$ over $A$. 

The invariance under Henselian pairs for $G$-torsors \cite{Hitchin-torsors}*{Theorem 2.1.6}\footnote{Or already its earlier version \cite{GR03}*{Theorem 5.8.14}.} ensures that $E$ lifts uniquely to a $G$-torsor $\cE$ over $B_\dR^+(A)$ and, by uniqueness, $\cE|_{B_\dR^+(A')} \simeq \cE'$ compatibly with the canonical identification modulo $I$. A choice of this $G$-torsor isomorphism over $B_\dR^+(A')$ allows us to transfer the descent datum of $\cE'$ to a descent datum of $\cE|_{B_\dR^+(A')}$. Exhibiting $\cE$ as a unique $G$-torsor descent of $\cE'$ becomes the task of finding a unique $G$-torsor automorphism $\gA$ of $\cE|_{B_\dR^+(A')}$ that reduces to the identity modulo $I$ such that precomposition with $\gA$ matches the transferred descent datum with the canonical descent datum of $\cE|_{B_\dR^+(A')}$. The $G$-torsor automorphisms of (base changes of) $\cE$ are parametrized by a reductive $B_\dR^+(A)$-group scheme $\cG$ that is a twist of $G$, in particular,
\[
\tst \cG(B_\dR^+(A')) \cong \varprojlim_{n > 0} \cG(B_\dR^+(A')/I^n) \qx{with surjective transition maps,}
\]
so that it suffices to uniquely construct $\gA$ inductively modulo $I^n$ for all $n > 0$. More precisely, using the surjectivity of the transition maps and the inductive hypothesis, it suffices to show that every descent datum of $\cE|_{B_\dR^+(A')/I^{n + 1}}$ that reduces to the canonical descent datum modulo $I^n$ may be transformed to the canonical descent datum of $\cE|_{B_\dR^+(A')/I^{n + 1}}$ by precomposing with a uniquely determined $G$-torsor automorphism of $\cE|_{B_\dR^+(A')/I^{n + 1}}$ that reduces to the identity modulo $I^n$. By deformation theory \cite{Ill05}*{Theorem~8.5.9~(a)}, the $G$-torsor automorphisms of $\cE|_{B_\dR^+(A')/I^{n + 1}}$ that reduce to the identity modulo $I^n$ are parametrized by $H^0(A', \Lie \cG \tensor_{A} I^n/I^{n +1})$, and similarly over $A''$, etc. Thus, the task becomes showing the following for the \v{C}ech cohomology groups of our~$v$-cover:
\[
\check{H}^1_v(A'/A, \Lie \cG \tensor_{A} I^n/I^{n +1}) \cong 0 \qxq{and}  \check{H}^0_v(A'/A, \Lie \cG \tensor_{A} I^n/I^{n +1}) \cong \Lie \cG \tensor_{A} I^n/I^{n +1}. 
\] 
Since $\Lie \cG \tensor_{A'} I^n/I^{n +1}$ is a finite projective $A$-module, both of these desired identifications follow  by combining the \v{C}ech-to-derived spectral sequence with the vanishing of the higher $v$-cohomology of the structure sheaf on affinoid perfectoids \cite{SW20}*{Theorem~17.1.3}.

In conclusion, $\cE$ is actually a unique descent of $\cE'$ relative to the descent datum that we started from. It remains to argue that $\iota'$ descends uniquely as well, for which it now suffices to show that 
\[
\cE(B_\dR(A)) \ra \cE(B_\dR(A')) \rightrightarrows \cE(B_\dR(A''))
\]
is an equalizer diagram. For this, since $\cE$ inherits affineness from $G$, it suffices to show that 
\[
B_\dR(A) \ra B_\dR(A') \rightrightarrows B_\dR(A'')
\]
is an equalizer diagram. Since the ideal $I$ is functorial and principal, it suffices to show the same for 
\[
B_\dR^+(A) \ra B_\dR^+(A') \rightrightarrows B_\dR^+(A'').
\]
This, however, may be checked modulo powers of $I$, where, since $I$ is generated by a nonzerodivisor, it follows from the structure presheaf being a $v$-sheaf on perfectoid spaces \cite{Sch22}*{Theorem~8.7}.
\end{proof}

%\bpp[Notation and conventions] \label{conv}

%\epp

%%%%%%%%%%%%%%%%%%%%%%%%%%%%%%%%%%%%%

%%%%%%%%%%%%%%%%%%%%%%%%%%%%%%%%%%%%%

\section{The analytic topology suffices} \label{sec:enough}

We are ready for our promised main result that the analytic sheafification suffices in the forming of the $B_\dR^+$-affine Grassmannian. This hinges on algebraization and approximation techniques from \cite{Hitchin-torsors}*{Section 2} and on the discrete valuation ring case of the Grothendieck--Serre~conjecture. 

\bthm\label{thm:main}
For a nonarchimedean local field $K$ and a reductive group scheme $G$ defined either over $K$ or over $\cO_K$ as in \uS\uref{sec:recall}, the $B_\dR^+$-affine Grassmannian $\Gr_G^{B_\dR^+}$ is the sheafification of the presheaf quotient $LG/L^+G$ with respect to the analytic topology on perfectoid  $\cO_K$-algebra pairs $(A, A^+)$. 
\ethm

We recall that the \emph{analytic topology} is the one whose covers $\{(A, A^+) \ra (A_j, A_j^+)\}_{j \in J}$ are characterized by the maps $\Spa(A_j, A_j^+) \ra \Spa(A, A^+)$ being jointly surjective open immersions.

\bpf
By \Cref{prop:Gr-is-v-sheaf}, the $B_\dR^+$-affine Grassmannian $\Gr_G^{B_\dR^+}$ is a sheaf for the analytic topology (even for the $v$-topology) and contains the presheaf quotient $LG/L^+G$ as a subfunctor that parametrizes those pairs $(\cE, \iota)$ in which $\cE$ is a trivial torsor. Thus, we only need to show that for every perfectoid  $\cO_K$-algebra pair $(A, A^+)$ and every $G$-torsor $\cE$ over $B_\dR^+(A)$ that becomes trivial over $B_\dR(A)$, each $x \in \Spa(A,A^+)$ lies in some rational open subset $\Spa(A',A^{\prime+}) \subset \Spa(A,A^+)$ such that $\cE|_{B_\dR^+(A')}$ is trivial. For this, since $B_\dR^+(A')$ is complete with respect to the kernel of its surjection onto $A'$, by the smoothness of $\cE$ inherited from $G$ (or by \cite[Theorem 2.1.6]{Hitchin-torsors}), it suffices to show that for each $x \in \Spa(A,A^+)$, the $G$-torsor $E \ce \cE|_A$ trivializes over some $A'$ as above. 

To find the desired $A'$, we first let $\Spa(A_i, A_i^+) \subset \Spa(A, A^+)$ range over all the rational open subsets containing $x$ and %, as in the proof of \Cref{prop:Gr-is-v-sheaf}, 
consider the perfectoid residue pair $(k_x, k_x^{+})$~with
\begin{equation*}
\tst    k_x^{+}\cong\left(\varinjlim_i A_i^{+}\right)^{\wedge} \qxq{and} k_x \cong k_x^+[\f1\varpi],
\end{equation*}
where $\varpi \in A^+$ is a pseudouniformizer and the completion is $\varpi$-adic. Each $A_i^+$ is $\varpi$-Henselian (even $\varpi$-adically complete) and $A_i \cong A_i^+[\f1\varpi]$, so $\varinjlim_i A_i^{+}$ is $\varpi$-Henselian (see \cite{SP}*{Lemma~\href{https://stacks.math.columbia.edu/tag/0FWT}{0FWT}}) with $\varinjlim_i A_i \cong (\varinjlim_i A_i^{+})[\f1\varpi]$. The algebraization and approximation \cite[Corollary 2.1.22~(c)]{Hitchin-torsors}\footnote{Or already its earlier version \cite{GR03}*{Theorem~5.8.14}.} %(applied with $B \ce \varinjlim_i A_i$ and $B' \ce \varpi (\varinjlim_i A_i^{+})$),
gives
\[
\tst \varinjlim_i H^1(A_i, G) \cong  H^1(\varinjlim_i A_i, G) \isomto H^1(k_x,G)
\]
(for the first isomorphism in this display, see, for instance, \cite{poitou-tate}*{Lemma 2.1}).\footnote{In fact, we will only use the injectivity of the displayed map, which also follows from the Elkik-style approximation result \cite{Hitchin-torsors}*{Theorem 2.2.2}, or already from its earlier version \cite{GR03}*{Proposition 5.4.21}.} Thanks to this, all that remains is to show that the $G$-torsor $E|_{k_x}$ is trivial granted that we know that it lifts to a $G$-torsor over $B_\dR^+(k_x)$, namely, to $\cE|_{B_\dR^+(k_x)}$, that becomes trivial over $B_\dR(k_x)$. 

However, $B_\dR^+(k_x)$ is a discrete valuation ring and $G$ is reductive, so the Nisnevich case of the Grothendieck--Serre conjecture \cite{Nis82}*{Chapter II, Theorem 4.2}, \cite{Nis84}*{th\'{e}or\`eme 2.1} (see also \cite{Guo22a}*{Theorem 1}) implies that no nontrivial $G$-torsor over $B_\dR^+(k_x)$ trivializes over $B_\dR(k_x)$.
\epf

%Now, if $h\colon (R,R^+)\to (K(x)^\sharp,K(x)^{\sharp+})$ is the tautological map, then $(
%\Bdrp(h)^\ast\mc{E},\Bdrp(h)^\ast\iota)$ belongs to $\Gr_\mc{G}(K(x)^{\sharp},K(x)^{\sharp+})$. But, as the pair of rings $(\Bdrp(K(x)^\sharp),\Bdr(K(x)^\sharp))$ is (non-canonically) isomorphic to the pair$(K(x)^{\sharp}\llb t\rrb,K(x)^{\sharp}\llp t\rrp)$ we deduce from the fact that $\mc{G}$ is nice that $\Bdrp(h)^\ast\mc{E}$ is the trivial $\mc{G}$-torsor, and thus so is the $\mc{G}$-torsor $h^\ast\mc{E}$ over $\Spec(K(x)^\sharp)$. From the above discussion, this implies that there exists some $i$ such that $f_i^\ast\mc{E}$ is a trivial $\mc{G}$-torsor over $S_i$.  The claim follows.

%As $\Gr_\mc{G}$ is an \'etale sheaf, we obtain an induced morphism $(L\mc{G}/L^+\mc{G})^\ddagger\to \Gr_\mc{G}$ which, by \cite[Proposition 19.1.2]{SW20}, is an isomorphism.\alex{Again, I think that this proof in the literature is slightly lacking in details, and something closer to the proof of Proposition \ref{prop:analytic-sheafification-enough} is potentially closer to having the full details. Namely, if we run the same argument again with a geometric point then we just win as every torsor over $K\llb t\rrb$ is trivial if $K$ is algebraically closed.} 

\brem
The only way in which we used the assumption that our group $G$ is reductive, as opposed to, say, merely smooth and affine, is to apply the Grothendieck--Serre type result that no nontrivial $G$-torsor over the discrete valuation ring $B_\dR^+(k_x)$ trivializes over $B_\dR(k_x)$. Granted that this input is obtained, the same argument would give \Cref{thm:main} for other classes of smooth affine groups, for instance, it would be interesting to know whether the same holds for parahoric $G$.
\erem

\subsection*{Acknowledgements}  
This project started during the first-named author's visit to the University of Tokyo. He thanks the University of Tokyo, especially, Naoki Imai, for hospitality during this visit.
 We thank the referee, as well as Bhargav Bhatt and Arpon Raksit, for helpful comments. This project has received funding from the European Research Council (ERC) under the European Union's Horizon 2020 research and innovation programme (grant agreement No.~851146). This project is based upon work supported by the National Science Foundation under Grant No.~DMS-1926686. During the writing of this paper the second named author was a JSPS fellow, supported by a KAKENHI Grant-in-aid (grant number 22F22323).

\begin{bibdiv} \begin{biblist} 
\bibselect{bibliography}
\end{biblist} \end{bibdiv}

%%%%%%%%%%%%%%%%%%%%%%%%%%%%%%%%%%

%\subfile{}

\end{document}